\newcommand{\red}[1]{\textcolor{red}{#1}}
\newcommand{\blue}[1]{\textcolor{blue}{#1}}
\newtheorem{theorem}{Theorem}[section]
\newtheorem{cor}[theorem]{Corollary}
\newtheorem{prop}[theorem]{Proposition}
\newtheorem{lemma}[theorem]{Lemma}
\newtheorem{rem}[theorem]{Remark}
\newtheorem{problem}[theorem]{Problem}
\newtheorem{exa}[theorem]{Example}
\providecommand{\customgenericname}{}
\newcommand{\newcustomtheorem}[2]{%
  \newenvironment{#1}[1]
  {%
   \renewcommand\customgenericname{#2}%
   \renewcommand\theinnercustomgeneric{##1}%
   \innercustomgeneric
  }
  {\endinnercustomgeneric}
}
\newcommand{\semid}{\rtimes}                          
\newcommand{\nor}{\trianglelefteq}         
\newcommand{\udot}{^{{}^{\textbf{ .}}}}               
\newcommand{\abs}[1]{\left\vert#1\right\vert}         
\newcommand{\set}[1]{\left\{#1\right\}}               
\newcommand{\seq}[1]{\left<#1\right>}                 
\newcommand{\lto}{\longrightarrow}
\newcommand{\lmto}{\longmapsto}
\newcommand{\GL}{{\operatorname{GL}}}
\newcommand{\SL}{{\operatorname{SL}}}
\newcommand{\PGL}{{\operatorname{PGL}}}
\newcommand{\GammaL}{{\operatorname{\Gamma L}}}
\newcommand{\PGammaL}{{\operatorname{P\Gamma L}}}
\newcommand{\PSL}{{\operatorname{PSL}}}
\newcommand{\AGL}{{\operatorname{AGL}}}
\newcommand{\ASL}{{\operatorname{ASL}}}
\newcommand{\ASigmaL}{{\operatorname{A\Sigma L}}}
\newcommand{\AGammaL}{{\operatorname{A\Gamma L}}}
\newcommand{\PGU}{{\operatorname{PGU}}}
\newcommand{\PGammaU}{{\operatorname{P\Gamma U}}}
\newcommand{\PSU}{{\operatorname{U}}}
\newcommand{\Sp}{{\operatorname{Sp}}}
\newcommand{\PSp}{{\operatorname{PSp}}}
\DeclareMathOperator{\Aut}{Aut}
\DeclareMathOperator{\Out}{Out}
\DeclareMathOperator{\Inn}{Inn}
\DeclareMathOperator{\soc}{soc}
\begin{document}

\title{On some questions related to integrable groups}

\author{Russell D. Blyth} 
\address{Department of Mathematics and Statistics, Saint Louis University, 220 N. Grand Blvd., St. Louis, MO 63103, USA}
\email{\href{mailto:russell.blyth@slu.edu}{russell.blyth@slu.edu}}
\author{Francesco Fumagalli} 
\address{Dipartimento di Matematica e Informatica 'Ulisse Dini', 
Viale Morgagni 67/A, 50134 Firenze, Italy}
\email{\href{mailto:francesco.fumagalli@unifi.it}{francesco.fumagalli@unifi.it}}
\author{Francesco Matucci} 
\address{Dipartimento di Matematica e Applicazioni, Universit\`{a} degli Studi di Milano--Bicocca, Milan 20125, Italy.}
\email{\href{mailto:francesco.matucci@unimib.it}{francesco.matucci@unimib.it}}

\thanks{The first author gratefully acknowledges the support of the  Universit\`a 
degli Studi di Firenze  and the Stolle Fund of the College of Arts and Sciences, Saint Louis University. The second and third authors are members of the Gruppo Nazionale per le Strutture Algebriche, Geometriche e le loro Applicazioni (GNSAGA) of the Istituto Nazionale di Alta Matematica (INdAM) and the third author gratefully acknowledges the support of the 
Funda\c{c}\~ao para a Ci\^encia e a Tecnologia (CEMAT-Ci\^encias FCT projects UIDB/04621/2020 and UIDP/04621/2020) and of the Universit\`a degli Studi di Milano--Bicocca
(FA project ATE-2017-0035 ``Strutture Algebriche'').}
\date{}

\subjclass[2010]{Primary 20D99, 20F14, 20D06, 20E28; Secondary 20G40, 20G41}

\keywords{integrable groups, 2-homogeneous groups, 
almost-simple groups}

\begin{abstract}A group $G$ is \emph{integrable} if it is isomorphic to the derived 
subgroup of a group $H$; that is, if $H'\simeq G$, and in this case $H$ 
is an \emph{integral} of $G$. 
If $G$ is a subgroup of $U$, we say that $G$ is 
\emph{integrable within $U$} if $G=H'$ for some $H\leq U$.
In this work we focus on two problems 
posed in \cite{ACCM1}. We classify the almost-simple finite groups $G$ that are integrable, 
which we show to be equivalent to those integrable within $\Aut(S)$, where $S$ is the socle of $G$. 
We then classify all $2$-homogeneous 
subgroups of the finite symmetric group $S_n$ 
that are integrable within $S_n$.
\end{abstract}
\maketitle
\begin{center}
\textit{This paper is dedicated to the memory of Carlo Casolo}
\end{center}

\section{introduction}

Recently several articles (\cite{FM17,ACCM1,ACCM2}) have appeared on the topic of the integrability of a group $G$, where we say that a group $H$ is an \emph{integral} 
of a group $G$ if $G \simeq H'$ and we then
say that $G$ is \emph{integrable}. Given two groups $G \le U$,
we say that $G$ is \emph{(relatively) integrable within} $U$ if there exists a subgroup $H \le U$ such that $H'=G$. 
Burnside \cite{B1913} was the first to consider integrals of groups, showing, for example, that a nonabelian finite $p$-group with cyclic center cannot have an integral which is a finite $p$-group. 
Since then, several other authors 
have considered which groups can appear as derived subgroups under certain restrictions.
Among known results, we mention that all abelian groups are integrable, while all direct powers of dihedral groups
$D_{2n}$ are non-integrable for $n \ge 3$, and that if a finite group has an integral, it also has a finite integral. We leave the reader to explore these results and other prior work (see
\cite{ACCM1, ACCM2} and their references). 

In \cite[Section 8.2]{ACCM1} the following problem is posed:

\begin{customprob}{1}
\label{p:almost-simple}
Classify all the almost-simple finite groups 
$G$ that are integrable within $\Aut(S)$, 
where $S$ denotes the socle of $G$.
\end{customprob}

More generally, we can ask the following question.

\begin{customprob}{1$'$}
\label{p:almost-simple-b}
Classify all the integrable almost-simple finite groups.
\end{customprob}

In Section \ref{s-almost-simple} we provide the following response.

\begin{customthm}{A}
\label{thm:almost_simple}
A finite almost-simple group $G$ (with socle $S$) is integrable if and only 
if it is integrable within $\Aut(S)$.
Moreover, 
the subgroups between $S$ 
and $\Aut(S)$ 
that are integrable within $\Aut(S)$ are precisely those contained in $\Aut(S)'$.
\end{customthm}

In Section \ref{S-2-homog} we consider the relative integrability of $2$-homogeneous groups. A subgroup $G$ acting on a set $\Omega$ 
is said to be \emph{$k$-homogeneous} (or 
\emph{$k$-set transitive}) if it is 
transitive on the set $\Omega^{\set{k}}$  
of all $k$ element subsets of 
$\Omega$ (for $k\geq 1$). 
We are interested in the following problem, which is also mentioned in \cite[Section 8.2]{ACCM1}.

\begin{customprob}{2}\label{p:2-homo}
Classify all the $2$-homogeneous 
subgroups $G$ 
of the finite symmetric group $S_n$ that are 
integrable within $S_n$.
\end{customprob}

In Section \ref{S-2-homog} we prove

\begin{customthm}{B}\label{thm:main_int_2-homo}
Let $G$ be a $2$-homogeneous subgroup of the 
finite symmetric group $S_n$ and let $S$ be its socle. 
If $G$ is integral within $S_n$ then $G$ lies between $(N_{S_n}(S))''$ and $(N_{S_n}(S))'$; the converse is also true with the exception of a few solvable groups (see Remark  
\ref{rem:thmB+exceptions}). 
\end{customthm}
 
The proof of Theorem \ref{thm:main_int_2-homo} 
is based on the classification of the $2$-homogeneous
subgroups of $S_n$ (see Theorem 
\ref{t:2-homo_classification}) and it is completed by 
considering each possible case appearing in that classification. The proof, of course, 
relies on the classification of finite simple groups.

We note that the group $\PGL_3(7)$, in its 
action on the $57$ projective points, is an 
example of a $2$-homogeneous subgroup of $S_{57}$ that is 
not integrable within $S_{57}$. Nevertheless, 
the group $\PGL_3(7)$ is 
integrable as an abstract group (see Example 
\ref{ex:PGammaL_3(7)}). 


Most of our notation is standard and well-known.
We write actions on the right, and for $x, y$ in a group $G$, we define the conjugation $x^y$ to be $y^{-1}xy$ and the commutator $[x,y]$ to be $x^{-1}y^{-1}xy$.
We use the ATLAS \cite{Atlas} notation for some groups and constructions. Hence we use $A.B$ to denote any group that has a normal subgroup isomorphic to $A$ for which the corresponding quotient group is isomorphic to $B$ (an (upward) extension of $A$ by $B$), and we use $A\!:\!B$ to indicate a case of $A.B$ that is a split extension  $A \semid B$ (a notation we also sometimes use), while $A\!\udot\! B$ denotes a non-split extension.  
Also, given a  positive integer $n$ we denote the cyclic group of order $n$ with both symbols $C_n$ and $n$. 

Finite affine 
semilinear transformation groups $\AGammaL_d(q)$, 
and their important 
subgroups (such as 
$\ASL_d(q)$ and $\AGL_d(q)$) arise 
in Section \ref{S-2-homog}; as a reference, 
we suggest \cite[Section 2.8]{DM}. 

\section{Preliminaries}

We start with a simple example that draws a clear distinction between the integrability of a group and the integrability of a particular subgroup within a particular group.

\begin{exa}
{\rm Consider the dihedral group $D_8 = \langle r,s \mid r^4=s^2=1, srs^{-1}=r^{-1}\rangle$.
Since $D_8'=\{1,r^2\}$, we see immediately that $D_8'$ is the unique subgroup of order $2$ of $D_8$ to be integrable within $D_8$, while the other four subgroups of order $2$ are not integrable within $D_8$ (although they are all integrable since they are abelian).}
\end{exa}


The two parts of the next result are fundamental in reducing integrability within certain groups to integrability within suitable quotients.

\begin{lemma}\label{l:basic}
Let $T$ be a finite group and $G$ be a subgroup of $T$.
\begin{enumerate}
\item If $G$ is not a cyclic $p$-group and it
contains a 
unique minimal normal subgroup $S$, then $G$ is 
integrable within $T$ if and only if $G/S$ is 
integrable within $N_{T}(S)/S$.
\item If $G$ admits a nontrivial characteristic 
perfect subgroup $K$, then $G$ is integrable within $T$ if 
and only if $G/K$ is integrable within $N_{T}(K)/K$.
\end{enumerate}
\end{lemma}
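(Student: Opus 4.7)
The plan is to handle both parts uniformly and dispose of the forward implications together, since they rest on the same observation. In part (1), $S$ is characteristic in $G$ because it is the unique minimal normal subgroup; in part (2), $K$ is characteristic by hypothesis. If $G = H'$ for some $H \le T$, then $H$ normalises $G$ and hence also normalises $S$ (resp.\ $K$); moreover $S, K \le G \le H$. Passing to the quotient, $\overline H := H/S$ (resp.\ $H/K$) is a subgroup of $N_T(S)/S$ (resp.\ $N_T(K)/K$) whose derived subgroup equals $H'S/S = G/S$ (resp.\ $G/K$).

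The backward implication of (2) is routine: lifting $\overline H$ to its preimage $H$ in $N_T(K)$ gives $K \le H$ and $H'K = G$. Because $K$ is perfect and contained in $H$, we have $K = K' \le [H,H] = H'$, and therefore $G = H'K = H'$.

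For the backward implication of (1), let $H$ be the preimage of $\overline H$ in $N_T(S)$, so that $S \le H$ and $H'S = G$. The goal is to show $S \le H'$, which will upgrade $H'S = G$ to $H' = G$. Consider $S \cap H'$: since $S \le H$ and $H'$ is characteristic in $H$, the subgroup $S$ normalises $H'$; dually $H' \le H \le N_T(S)$ normalises $S$. Hence both $S$ and $H'$ normalise $S \cap H'$, and so does their product $G = H'S$. Thus $S \cap H' \nor G$, and by minimality of $S$ we have either $S \le H'$ (done) or $S \cap H' = 1$.

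The main obstacle is ruling out $S \cap H' = 1$, which is where both hypotheses (uniqueness of $S$ and $G$ not being a cyclic $p$-group) enter. In that case $S$ and $H'$ are normal subgroups of $G$ with trivial intersection and product $G$, so $G = H' \times S$. I split on the type of the characteristically simple group $S$. If $S$ is non-abelian, then $S = S' \le G' \le H'$ (using $G \le H$), contradicting $S \cap H' = 1$. If $S$ is elementary abelian, then $H' \nor G$ is disjoint from the unique minimal normal subgroup $S$, which forces $H' = 1$ (else $H'$ would contain a minimal normal subgroup of $G$, necessarily equal to $S$); but then $G = S$ is elementary abelian and admits itself as a unique minimal normal subgroup, forcing $|G| = p$ and hence $G$ cyclic of prime order, contrary to hypothesis.
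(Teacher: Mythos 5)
Your proof is correct and follows essentially the same route as the paper: characteristicity of $S$ (resp.\ $K$) in $H'$ gives the forward directions, perfectness of $K$ settles the converse in (2), and uniqueness of the minimal normal subgroup together with the non-cyclic-$p$-group hypothesis settles the converse in (1). The only difference is cosmetic: the paper observes directly that $H'\nor G$, so either $S\le H'$ or $H'=1$, and in the latter case $G\le H$ is abelian with a unique minimal normal subgroup, hence a cyclic $p$-group, a contradiction -- which shortcuts your detour through $S\cap H'$, the direct-product decomposition, and the case split on whether $S$ is abelian.
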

\begin{proof} 
(1) Let $H\leq T$ be such that $H'=G$. Of course $S$, 
since it is characteristic in $H'$, is normal in $H$, 
that is, $H\leq N_{T}(S)$. Then we have that 
$H/S$ is a subgroup of $N_{T}(S)/S$ having derived subgroup equal to $G/S$.
Conversely, assume that $H/S\leq N_{T}(S)/S$ 
is such that $(H/S)'=G/S$. Thus $G=H'S \le HS$, and since $S \le H$, we have
$G \le H$. 
Since $S$ is the unique minimal normal 
subgroup of $G$ and $H'\nor G$, we have that 
either $H'=1$ or $S\leq H'$. If $H'=1$ then, since $G\leq H$,  $G$ is abelian 
and therefore $G$ is a cyclic $p$-group for some prime $p$, which is a
contradiction. Thus $S \leq H'$, and so $G=H'$ is 
integrable within $T$.

(2) If  $G=H'$ for some $H\leq T$, then since $K$ 
is characteristic in $H'$, it is normal in $H$, 
that is, $H\leq N_T(K)$. But then $H/K$ is a 
subgroup of $N_T(K)/K$ such that $(H/K)'=G/K$.
Conversely, assume now that $G/K=(H/K)'$ for some 
$H\leq N_T(K)$. Then $G=H'K$. Since $K\leq H$ and 
$K$ is perfect, we have $K=K'\leq H'$ and so $G=H'$.
\end{proof} 


We will need the following result, which does not require the involved groups to be finite.

\begin{lemma}\label{l:metacyclic}
Let $A$ be a 
metacyclic group 
$A=\seq{x,y}$ with $\seq{x}\nor A$.  
The subgroups of $A$ that are 
integrable within $A$ are precisely 
all the subgroups of $A'$.   
\end{lemma}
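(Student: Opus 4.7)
The plan is to prove both inclusions. The easy direction is immediate: if $G = H'$ for some $H \leq A$, then $H' \leq A'$ forces $G \leq A'$.

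For the converse, fix a subgroup $G \leq A'$. First, since $A/\langle x\rangle$ is cyclic (generated by $y\langle x\rangle$) and hence abelian, $A' \leq \langle x\rangle$; in particular $G$ is itself cyclic, of the form $G = \langle x^d\rangle$. Writing $x^y = x^a$---well-defined because $\langle x\rangle$ is normal in $A$---one computes $[x,y] = x^{a-1}$, and since $(x^{a-1})^y = x^{a(a-1)} \in \langle x^{a-1}\rangle$ and $\langle x\rangle$ is abelian, the normal closure collapses to give $A' = \langle x^{a-1}\rangle$. The hypothesis $G \leq A'$ is therefore equivalent to the solvability of the congruence $c(a-1) \equiv d \pmod{|x|}$ for some integer $c$ (or of the analogous equation in $\mathbb{Z}$ when $x$ has infinite order, in which case $a = \pm 1$).

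Having fixed such a $c$, I would set $H := \langle x^c, y\rangle$. Since $(x^c)^y = x^{ca}$ is a power of $x^c$, the subgroup $\langle x^c\rangle$ is normal in $H$ and $H/\langle x^c\rangle$ is cyclic, generated by the image of $y$. Thus $H$ is metabelian, and $H' \leq \langle x^c\rangle$ coincides with the normal closure in $H$ of $[x^c, y] = x^{c(a-1)}$. Its $y^j$-conjugates $x^{c(a-1)a^j}$ are all integer powers of $x^{c(a-1)}$, so $H' = \langle x^{c(a-1)}\rangle = \langle x^d\rangle = G$, as desired. The degenerate case $G = 1$ (in particular when $A$ is abelian, so $a = 1$) is handled by taking $H$ to be any cyclic subgroup of $A$, for instance $H = \langle y\rangle$.

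The only technical point requiring care is to keep the cyclic-group arithmetic uniform across the cases $|x| < \infty$ and $|x| = \infty$; once the identifications $A' = \langle x^{a-1}\rangle$ and $H' = \langle x^{c(a-1)}\rangle$ are in hand, the rest of the argument is forced by the metabelian structure of $H$.
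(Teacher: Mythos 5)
Your proof is correct and follows essentially the same route as the paper's: identify $A'=\seq{[x,y]}$ as a cyclic subgroup of $\seq{x}$, and for a given subgroup $\seq{x^{c(a-1)}}\leq A'$ take $H=\seq{x^c,y}$ and compute $H'=\seq{[x^c,y]}=\seq{x^{c(a-1)}}$. Your write-up is merely a bit more explicit about the normal-closure computation and the infinite-order case, but the key construction is identical.
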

\begin{proof}
Let $B$ be any subgroup of $A$ that is integrable 
within $A$. Then there exists some $C\leq A$ such 
that $B=C'$, and therefore $B\leq A'$. 
For the converse, write $[x,y]=x^c$ 
for some $c \in \mathbb{Z}$, so that $A'=\seq{x^c}$ by standard commutator calculus. 
Since $[x,y]$ commutes with $x$, we have
$[x^t,y]=x^{ct}$
for some $t \in \mathbb{Z}$. Therefore if $B$ is an arbitrary subgroup of $A'$, 
say $B=\seq{x^{ct}}$, then if we set $C=\seq{x^t,y}$ 
we have $B=C'$.
\end{proof}

The following example shows that 
Lemma \ref{l:metacyclic} cannot be extended to the classes of 
supersolvable or nilpotent groups. 
\begin{exa}\label{ex:D_8wrC_2}
{\rm Let  $G$ be the standard wreath product $G=D_8\wr C_2$. Then  $G'\simeq D_8\times C_2$, and therefore not all 
subgroups of $G'$ are integrable, since $D_8$ is not integrable (see \cite{ACCM1} for a history of this result). In this particular case
the subgroups of $G'$ that are not integrable in $G$ are:
four copies of $D_8$ (all maximal in $G'$) and three copies 
of $C_2\times C_2$, these last three subgroups are each contained 
in a unique maximal subgroup of $G'$ of order $8$
(one $C_4\times C_2$ and two $(C_2)^3$).}
\end{exa}

\section{integrable almost-simple groups}\label{s-almost-simple}

We recall that \cite[Theorem 3.2]{ACCM2} states that if a group $G$ is integrable, then 
$\Inn(G) \le \Aut(G)'$ and that, indeed, $\Inn(G)$ has an integral within $\Aut(G)$.
Following the language of ~\cite[Section 5]{ACCM1}, we recall that a group $H$ is a \emph{reduced integral} of a group $G$ if $H'=G$ and $C_H(G)=1$. The following result uses ideas from  \cite[Lemma~5.2]{ACCM1} and  \cite[Theorem~3.2]{ACCM2} and does not require the groups to be finite.


\begin{theorem}
\label{p:almost_simple_tot_int}
Let $S$ be a 
nonabelian simple group and let $G$ be a group such that 
$\Inn(S)\leq G\leq \Aut(S)$. Then $G$ is integrable if 
and only if $G$ is integrable within $\Aut(S)$.
\end{theorem}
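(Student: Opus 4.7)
The ``if'' direction is immediate from the definitions. For the converse, assume $G=H'$ for some group $H$; the task is to construct a subgroup $K\le\Aut(S)$ with $K'=G$. The strategy is to push $H$ down into $\Aut(S)$ via its conjugation action on $S$, and then verify that the image restricts nicely on $G$.

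First I would observe that $\Inn(S)$ is the unique minimal normal subgroup of $G$, hence characteristic in $G$. Indeed, $\Inn(S)\cong S$ is nonabelian simple, so it is minimal normal in $G$; uniqueness follows from the standard fact $C_{\Aut(S)}(\Inn(S))=1$, since any other minimal normal subgroup $N$ of $G$ would satisfy $N\cap\Inn(S)=1$, hence $[N,\Inn(S)]=1$, forcing $N=1$. Because $G=H'\nor H$, one then has $\Inn(S)\nor H$, and the conjugation action of $H$ on $S$ produces a homomorphism $\phi\co H\to\Aut(S)$ (with kernel $C_H(S)$).

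The key step is to verify that $\phi|_G$ coincides with the inclusion $G\hookrightarrow\Aut(S)$. Under the identification $s\leftrightarrow\gamma_s$ of $S$ with $\Inn(S)\le G\le H$, the standard identity $g^{-1}\gamma_s g=\gamma_{g^{-1}(s)}$ valid in $\Aut(S)$ shows that conjugation by $g\in G$ inside $H$ acts on $\Inn(S)$ exactly as $g$ acts on $S$ as an automorphism. Hence $\phi(g)=g$ for every $g\in G$, so $\phi(G)=G$. Setting $K:=\phi(H)\le\Aut(S)$ then yields
\[
K'=\phi(H)'=\phi(H')=\phi(G)=G,
\]
proving that $G$ is integrable within $\Aut(S)$. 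The only mildly subtle point is the identification verified in the last paragraph, namely reconciling the abstract conjugation of $H$ on its characteristic subgroup $\Inn(S)$ with the given embedding $G\le\Aut(S)$; once this is in place the proof reduces to the tautology that $K'$ is the image of $H'=G$ under a homomorphism.
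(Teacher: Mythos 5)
Your proof is correct, and it shares the paper's central idea (turning the abstract integral $H$ into automorphisms of $S$ via conjugation on the characteristic subgroup $\Inn(S)\le G=H'$, and checking that this map restricts to the identity on $G$), but it takes a genuinely leaner route at the key point. The paper first invokes \cite[Lemma 5.2]{ACCM1}: since $Z(G)\le C_G(\Inn(S))=1$, one may replace $H$ by a \emph{reduced} integral with $C_H(G)=1$, which forces the kernel $C_H(S)$ of the conjugation action to be trivial, so that $H$ itself embeds into $\Aut(S)$. You avoid reduced integrals entirely by observing that faithfulness is irrelevant: for any homomorphism $\phi\co H\to\Aut(S)$ one has $\phi(H)'=\phi(H')=\phi(G)$, and since $\phi|_G$ is the inclusion, $\phi(G)=G$, so the image $K=\phi(H)$ is already an integral of $G$ inside $\Aut(S)$. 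This makes the argument more self-contained (no external lemma, no need to control $C_H(G)$); what the paper's version buys in exchange is the slightly stronger conclusion that a reduced integral of $G$ embeds as an actual subgroup of $\Aut(S)$. Two small points to tidy: in the paper's right-action convention the relevant identity is $g^{-1}\gamma_s g=\gamma_{s.g}$, whereas your formula $g^{-1}\gamma_s g=\gamma_{g^{-1}(s)}$ (left-function notation) makes the literal assignment $h\mapsto(\text{conjugation by }h)$ an anti-homomorphism into $\Aut(S)$; this is harmless (compose with inversion, or use $x\mapsto hxh^{-1}$), but the convention should be fixed once and used consistently. Also, strictly speaking integrability gives only $G\simeq H'$, so one should note, as the paper implicitly does, that after transporting structure one may assume $G=H'$ literally.
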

\begin{proof}
The converse implication is obvious, so we assume now 
that $G$ is integrable.

We recall that we write functions on the right, 
that is, if $g$ is an 
arbitrary element of $G$ and $s$ is an 
arbitrary element of $S$ we write $s.g$ for the image of 
$s$ under the automorphism $g$. Also, if $A$ is any group 
and $a\in A$, denote with $\gamma_a\in \Aut(A)$ the 
conjugation map $x.\gamma_a=a^{-1}xa$ for all $x \in A$. 
Since for every $g\in G$ and 
every $s\in S$ we have that 
$g^{-1}\gamma_sg=\gamma_{s.g}$,
the action of $G$ on $S$ is equivalent to the action by 
conjugation of $G$ on its normal subgroup $\Inn(S)$.
In particular, $C_G(\Inn(S))=1$, and $\Inn(S)$ 
is characteristic in $G$ 
(since $\Inn(S)$ is the unique minimal normal 
subgroup of $G$). 
Since $Z(G)\leq C_G(\Inn(S))=1$, we  apply 
\cite[Lemma 5.2]{ACCM1} to deduce that $G$ admits a 
reduced integral $H$, namely  a group $H$ 
such that $H'=G$ and $C_H(G)=1$. We extend the 
faithful action of $G$ on $S$ to a faithful action 
of $H$ on $S$ by defining, for every $h\in H$ and every 
$s\in S$, the element $s.h$ to be the unique element of $S$ 
such that $h^{-1}\gamma_sh=\gamma_{s.h}$. Note that this extension of the action
is well defined since $\Inn(S)$ is characteristic in $G$ 
and $H$ acts faithfully by conjugation on $G$. Let 
$K=C_H(S)$ denote the kernel of this action. Then $K$ is 
normal in $H$ and $K\cap G=C_G(S)=1$. Therefore 
$[K,G]=1$, but then $K\leq C_H(G)=1$. This shows that 
the above action of $H$ on $S$ is faithful, that is $H$ 
is an integral of $G$ within $\Aut(S)$.
\end{proof}

Throughout this section let $S$ be a finite nonabelian
simple group and, by identifying $S$ with 
$\Inn(S)$, let $S\leq G\leq \Aut(S)$. 
By applying Lemma \ref{l:basic} (case (1) with 
$T:=\Aut(S)=N_T(S)$) we 
have the following crucial fact. 
\begin{lemma}\label{l:almost-simple_1}
$G$ is integrable within $\Aut(S)$ if and only if 
$G/S$ is integrable in the group of outer automorphisms 
$\Out(S)=\frac{\Aut(S)}{\mathrm{Inn}(S)}$.
\end{lemma}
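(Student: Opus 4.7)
The plan is to apply Lemma~\ref{l:basic}(1) directly with $T := \Aut(S)$, taking the distinguished normal subgroup appearing in that lemma to be $S$ itself (identified with $\Inn(S)$, as in the setup of the section). The hypotheses to check are that $G$ is not a cyclic $p$-group, and that $S$ is the unique minimal normal subgroup of $G$.

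The first hypothesis is immediate: $G$ contains the nonabelian simple group $S$, so $G$ cannot be cyclic. For the second, I would rely on the standard fact (also invoked in the proof of Theorem~\ref{p:almost_simple_tot_int}) that $C_{\Aut(S)}(\Inn(S)) = 1$, which follows from $Z(S)=1$. Any nontrivial normal subgroup $N$ of $G$ must then satisfy $N \cap S \neq 1$, for otherwise $[N,S] \leq N \cap S = 1$ would yield $N \leq C_G(\Inn(S)) = 1$; the simplicity of $S$ then forces $S \leq N$, and so $S$ is indeed the unique minimal normal subgroup of $G$.

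With both hypotheses in hand, it remains to observe that $\Inn(S) \nor \Aut(S)$, so $N_T(S) = \Aut(S) = T$, and therefore $N_T(S)/S = \Aut(S)/\Inn(S) = \Out(S)$. Lemma~\ref{l:basic}(1) then delivers the asserted equivalence: $G$ is integrable within $\Aut(S)$ if and only if $G/S$ is integrable within $\Out(S)$. The main difficulty, such as it is, lies entirely in verifying the hypotheses of Lemma~\ref{l:basic}(1); the hint in the excerpt already points to exactly this strategy, and no further machinery is required.
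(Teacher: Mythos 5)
Your proposal is correct and follows the same route as the paper, which simply invokes Lemma~\ref{l:basic}(1) with $T:=\Aut(S)=N_T(S)$; your explicit verification that $S$ is the unique minimal normal subgroup of $G$ (via $C_{\Aut(S)}(\Inn(S))=1$) spells out a fact the paper had already established in the proof of Theorem~\ref{p:almost_simple_tot_int}. No changes are needed.
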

As an immediate consequence of Lemma \ref{l:almost-simple_1}
and Lemma \ref{l:metacyclic} we have that 
Theorem \ref{thm:almost_simple} holds for all those $S$ such that $\Out(S)$ is either abelian or metacyclic. 

To complete our classification of integrable 
almost-simple groups we need only focus on 
those $S$ for which $\Out(S)$ is neither abelian
nor metacyclic. 
Using the classification of finite simple groups (CFSG), such simple groups (and their 
automorphism groups) are summarized in the following 
proposition. 
\begin{prop}\label{p:nonab_out} 
Let $S$ be a finite simple group having outer 
automorphism group that is neither abelian nor metacyclic. 
Then $(S,\Out(S))$ must be contained in the following list:
\begin{enumerate}
\item $\big(A_n(q), d.(f\times 2)\big)$, 
where $n\geq 2$, $d=(n+1,q-1)>1$, $q=p^f$ and $f>1$ 
is even.
%
%
%
%
\item $\big(D_4(q), d.\big(f\times S_3)\big)$, where $d=(2,q-1)^2$ 
and $q=p^f$.
\item  $\big(D_n(q), d.\big(f\times 2)\big)$, where $n>4$, $q=p^f$ with $p$ an odd prime, and, respectively,
$d=(2,q-1)^2= 2^2$ when $n$ is even, and 
$d=(4,q^n-1)$ when $n$ odd. 
%
%
%
%
%
%
%
%
\item 
$\big(E_6(q), 3.(f\times 2)\big)$, where $d=(3,q-1)=3$, 
$q=p^f$ and $f$ is even.
%
%
\end{enumerate}
\end{prop}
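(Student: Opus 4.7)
The plan is to invoke the classification of finite simple groups and proceed through each family to see when $\Out(S)$ can fail to be both abelian and metacyclic. The easy reductions come first: if $S$ is cyclic of prime order then $\Out(S)$ is cyclic; for the alternating groups $A_n$ with $n\geq 5$ one has $\Out(A_n)\leq C_2\times C_2$, abelian; and a direct inspection via the ATLAS \cite{Atlas} shows that each of the $26$ sporadic simple groups has $|\Out(S)|\leq 2$. So any $S$ that could yield a non-abelian, non-metacyclic $\Out(S)$ must be of Lie type.

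For $S$ of Lie type I would invoke the standard description $\Out(S)=D.(F\rtimes\Gamma)$, where $D$ is the group of diagonal automorphisms modulo inner ones (cyclic in every family except type $D_n$ with $n$ even and $q$ odd, where $D\cong C_2\times C_2$), $F$ is the cyclic group of field automorphisms of order $f$ (with $q=p^f$), and $\Gamma$ is the graph or graph-field automorphism group, which is trivial, of order $2$, or equal to $S_3$ (the latter only for $D_4(q)$). Two easy reductions then apply. If $\Gamma=1$ then $\Out(S)=D.F$ is an extension of a cyclic group by a cyclic group, hence metacyclic, which disposes of all the twisted families $\tw{2}A_n$, $\tw{2}D_n$, $\tw{3}D_4$, $\tw{2}E_6$, $\tw{2}B_2$, $\tw{2}G_2$, $\tw{2}F_4$ and the untwisted types $B_n$, $C_n$, $F_4$, $G_2$, $E_7$, $E_8$. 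If $D=1$ then $\Out(S)$ embeds in $F\times\Gamma$; this is metacyclic even when $\Gamma=S_3$, for writing $f=2^a 3^b m$ with $\gcd(m,6)=1$ one sees that $C_3\times C_{2^a m}$ is a cyclic normal subgroup of $S_3\times C_f$ with cyclic quotient $C_{3^b}\times C_2$.

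These reductions leave only the Lie-type families in which both $D$ and $\Gamma$ are nontrivial: $A_n(q)$ with $n\geq 2$, $D_n(q)$ with $n\geq 4$ and $q$ odd, and $E_6(q)$. In the subcases where $D$ is cyclic and $\Gamma=C_2$ (namely $A_n(q)$, $E_6(q)$, and $D_n(q)$ with $n$ odd), the factor $C_f\times C_2$ is cyclic of order $2f$ precisely when $f$ is odd, and then $\Out(S)=C_d\rtimes C_{2f}$ is automatically metacyclic; non-metacyclicity therefore forces $f$ to be even, yielding exactly cases (1), (4), and the $n$ odd part of (3). The condition $d>1$ is simply what is needed to avoid $\Out(S)$ being abelian.

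The hardest part will be the $D_n(q)$ subfamilies with $n\geq 4$ even and $q$ odd, in which $D\cong C_2\times C_2$ is non-cyclic and $\Gamma$ acts nontrivially on $D$. For $D_4(q)$ the triality $\Gamma=S_3$ permutes the three involutions of $D$, so $D\rtimes\Gamma\cong\Sym_4$ is already non-metacyclic; this is case~(2). For $n\geq 6$ even, $\Gamma=C_2$ swaps two of the involutions of $D$ and produces a dihedral subgroup of order $8$, which together with $F$ still falls within the description of case~(3). The bulk of the effort is the careful bookkeeping, in each Dynkin type, of the precise action of $F$ and $\Gamma$ on $D$; once this is done the list is exhaustive.
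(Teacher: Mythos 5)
Your proposal is correct and follows essentially the same route as the paper: both arguments are a CFSG-based, family-by-family inspection of the known structure $\Out(S)\cong d.(f\times g)$ (diagonal, field, graph parts, with field and graph automorphisms commuting), concluding that only $A_n(q)$, $D_4(q)$, $D_n(q)$ and $E_6(q)$ with nontrivial diagonal and graph parts can fail to be abelian or metacyclic; your explicit reductions (graph part trivial or diagonal part trivial or $f$ odd each force metacyclicity, and since the proposition only asserts containment in the list, the sharper conditions you extract are harmless) simply spell out what the paper delegates to the ATLAS tables. The only slight gloss is the exceptional graph automorphisms of $B_2(2^f)$, $F_4(2^f)$ and $G_2(3^f)$, which do not fit the literal statement ``$\Gamma$ trivial,'' but there $\Out(S)$ is cyclic of order $2f$, so the conclusion is unaffected.
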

\begin{proof}
This result is a consequence of the CFSG. The ATLAS \cite[Tables 1 and 5]{Atlas} is a good reference. 
Note that a normal subgroup 
of order two in any group is always central,
which implies that the 
outer automorphism group of each of the following groups 
is always abelian or metacyclic: 
$A_1(q)$, $B_2(q)$ for $q$ odd, $B_n(q)$ and $C_n(q)$ 
when $n\geq 3$ and $E_7(q)$. Also note that field 
automorphisms commute with graph automorphisms 
(see \cite{Carter}).
\end{proof}


We complete the proof of Theorem \ref{thm:almost_simple}
by considering these four remaining cases of Proposition \ref{p:nonab_out}. 
For convenience in the following we set $A=\Out(S)$.\\ 

Case (1). $S=A_n(q)=\PSL_{n+1}(q)$, 
with $n\geq 2$, $d=(n+1,q-1)>1$ and $q=p^f$ 
with $f>1$ even.\\ 
In this situation the group $A$ is metabelian isomorphic to $d.(f\times 2)$ and it has the following 
presentation (see \cite[Proposition 2.2.3]{KL}):
$$\seq{\delta,\phi,\iota \,\vert\, 
\delta^d=\phi^f=\iota^2=1, \delta^\phi=\delta^p, 
\delta^\iota=\delta^{-1}, 
\phi^\iota=\phi}.$$
We have that $A'\leq \seq{\delta}$,  
since $A/\seq{\delta}$ is abelian. Moreover $A'\geq 
\seq{\delta^2, \delta^{p-1}}$, as $[\delta,\iota]=
\delta^{-2}$ and $[\delta,\phi]=\delta^{p-1}$.\\ 
Assume first that $p$ is odd. Then 
$\delta^{p-1}$ is a power of $\delta^2$, forcing
$\seq{\delta^2, \delta^{p-1}}=\seq{\delta^2}$. Since $A/\seq{\delta^2}$
is abelian when $p$ is odd, then $A'=\seq{\delta^2}$.
Moreover if we set 
$B=\seq{\delta,\iota}$ then $B$ is a metacyclic group 
whose derived subgroup is $B'=\seq{\delta^2}=A'$. 
Thus by Lemma \ref{l:metacyclic} every subgroup of $A'$ 
is integrable in $B$ and thus also in $A$.\\ 
Now we assume $p=2$. Since $\seq{\delta^2, \delta^{p-1}} \leq A'$, then $A'=\seq{\delta}$.
Arguing as in the preceding 
situation, every subgroup of $\seq{\delta}$ is 
integrable in $\seq{\delta,\iota}$ and thus in $A$ too.

We conclude that every subgroup of $A'$ is 
integrable in $A$.\\

Case (2). $S=D_4(q)=P\Omega_8^+(q)$.\\ 
By \cite[p. 181]{Kle} we have that 
$$\Out(S)\simeq \begin{cases} S_3\times f & \textrm{ if $q$ is even,}\\
S_4\times f & \textrm{ if $q$ is odd.} 
\end{cases}$$
So when $q$ is even all subgroups of $A'$ 
are integrable (these subgroups are just $1$ and $A'=A_3$, 
whose order is three).
When $q$ is odd then $A'=A_4$ and again all subgroups $H$ of 
$A'$ are integrable in $A$, since if 
$H=A'$ this is immediate, if $H=V_4$ then $H=A''$, if $H$ has 
order $3$ then $H=(S_3)'$ for some $S_3< S_4<A$, and, finally, 
if $H$ has order two then $H=(D_8)'$ for some 
$D_8<S_4<A$. \\

Case (3). 
$S=D_n(q)=O^+_{2n}(q)=P\Omega^+_{2n}(q)$, with $n>4$.\\
When the group $A$ is nonabelian and not metacyclic then,
according to \cite[Propositions 2.7.3 and 2.8.2]{KL}, $A$ is isomorphic to $D_8\times f$. 
Therefore in this case 
too the subgroups of $A$ that are integrable within $A$ are 
precisely all the subgroups of $A'$, namely, $1$ and $A'\simeq 2$.\\

Case (4). $S=E_6(q)$.\\
When $A$ is nonabelian and not metacyclic then 
$A\simeq 3.(f\times 2)$ with $f>1$ even. 
Then $\abs{A'}=3$ and, trivially, the subgroups of $A$ 
that are
integrable within $A$ are precisely all the subgroups 
of $A'$.\\

This completes the proof of Theorem \ref{thm:almost_simple}.

\section{Integrable 2-homogeneous groups}\label{S-2-homog}

The enumeration of $k$-transitive and $k$-homogeneous 
subgroups of the finite symmetric group $S_n$ 
(for $k\geq 2$) has a long history, which is 
intertwined with the discovery of various finite 
simple groups, extending back to the work of Mathieu 
\cite{Mathieu1860, Mathieu1861} (or even earlier, to 
notions of transitivity investigated by Cauchy (e.g., 
\cite{Cauchy1846})). The $2$-homogeneous subgroups 
of $S_n$ 
have been completely determined as part of 
the work on the classification of the 
finite simple groups. The list of $2$-homogeneous groups in the following theorem is extracted from Blackburn and Huppert \cite[XII, Remark 7.5]{BH3} and Dixon and Mortimer
\cite[Section 7.7 and Theorem 9.4B]{DM} (with minor errors corrected). We mention several contributions to this classification. Hering's results \cite{Her74, Her85} provide the tools for the classification of $2$-transitive groups for which the socle is regular and abelian (Cases (2)-(9) below), building on significant contributions by Huppert \cite{Hup57} (who completed the solvable point stabilizer subgroup case (Case (5) below)), Livingston and Wagner \cite{LW}, and Kantor \cite{Kan69, Kan72}. Curtis, Kantor and Seitz \cite{CKS1976} classified the $2$-transitive groups with socle a nonabelian simple group of Lie type (Cases (10)-(15)). Case (16), when the socle is sporadic, was undertaken by Hering in \cite{Her85}.

We summarize the complete list of the 
$2$-homogeneous subgroups of 
$S_n$ as follows.
\begin{theorem}\label{t:2-homo_classification}
Let $G$ be a $2$-homogeneous subgroup of $S_n$ 
and set $S=\soc(G)$ to be its socle. Then $G$ is 
one of the groups appearing in the following 
list:
\begin{itemize}
\item $G$ is not $2$-transitive.\\ 
This happens precisely when: 
\begin{enumerate}
\item[(1)]$G$ is a $2$-homogeneous subgroup 
of the affine semilinear group that contains the special affine linear group, 
$\ASL_1(q)\leq G \leq \AGammaL_1(q)$ and 
$n=q\equiv 3 \pmod{4}$. 
\end{enumerate}
\item $G$ is $2$-transitive and $S$ is regular and 
abelian.\\ 
Then $G\leq \AGammaL_d(q)$, with degree 
$n=q^d=\abs{S}$ and point stabilizer $G_0$ that acts 
transitively on the set of nonzero vectors in the 
underlying vector space. This case happens precisely 
when $G$ is the semidirect product 
$G=S\semid G_0$ and one of the following holds:
\begin{enumerate}
\item[(2)] $\SL_d(q)\leq G_0\leq \GammaL_d(q)$.
\item[(3)] $G_0 \leq \GammaL_d(q)$ and $G_0$  
contains a copy of $\Sp_d(q)$ as a normal subgroup. 
\item[(4)] $G_0\leq \GammaL_6 (2^f)$ and $G_0$ contains
as a normal subgroup respectively a copy of $G_2(2^f)$  
when $f>1$, and of $\PSU_3(3)$ when $f=1$. 
\item[(5)] $G_0$ is a solvable subgroup of $\GammaL_d(q)$ 
containing a 
normal extraspecial subgroup $E$ of order $2^{df+1}$ 
such that $C_{G_0}(E) = Z(G_0)$ 
and $G_0/EZ(G_0)$ is faithfully represented on $E/Z(E)$. 
Moreover,
this situation happens if and only if 
$G_0\leq \GL_d(q)$ where:
$$(d,q)\in \set{(2,3), (2,5), 
(2,7), (2,11), (2,23), (4,3)}.$$ 
\item[(6)]
$(G_0)''\simeq \SL_2(5)$ for $d =2$ and 
$q\in\set{9, 11, 19, 29, 59}$.
\item[(7)] $G_0\simeq A_6$, $d= 4$,  $q = 2$.
\item[(8)] $G_0\simeq A_7$, $d = 4$,  $q = 2$.
\item[(9)] $G_0\simeq \SL_2(13)$, $d = 6$,  
$q = 3$.
\end{enumerate}
\item $G$ is $2$-transitive and $S$ is a nonabelian 
simple group.\\ This case happens precisely when:
\begin{enumerate}
\item[(10)] $S=A_n$ and $G\in\set{A_n,S_n}$. 
\item[(11)]  
$S=\PSL_d(q)\leq G\leq \PGammaL_d(q)$ of degree 
$n=(q^d-1)/(q-1)$.
\item[(12)]
 $S=G=\PSp_{2m}(2)$ of two possible degrees 
 $n\in\set{2^{m-1}(2^m\pm 1)}$.
\item[(13)]  $S=\PSU_3(q)\leq G\leq \PGammaU_3(q)$ 
of degree $n=q^3+1$.
\item[(14)]  $S=\null^2B_2(q)\leq G\leq \Aut(S)$,  with 
$q=2^{2m+1}$ and degree $n=q^2+1$. 
\item[(15)] $S=\null^2G_2(q)\leq G\leq \Aut(S)$,  with 
$q=3^{2m+1}$ and degree $n=q^3+1$. 
\item[(16)] $S$ is isomorphic to one of:
\begin{enumerate}
\item  $M_{11}, M_{12}, M_{23}, M_{24}$ of degree, respectively,  
$11, 12, 23$ and $24$. Moreover $G=S$. 
\item $\PSL_2(11)$ of degree $11$, and $G=S$. 
\item $A_7$ or $A_8\simeq \PSL_4(2)$, both of degree $15$, and $G=S$. 
\item $HS$ of degree $176$ and $G\in\set{S,\Aut(S)=S.2}$. 
\item $Co_3$ of degree $276$ and $G=S$.
\end{enumerate}
\end{enumerate}
\end{itemize}
\end{theorem}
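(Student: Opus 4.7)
The plan is to prove Theorem \ref{t:2-homo_classification} by first separating the properly $2$-homogeneous case from the $2$-transitive case, then within the $2$-transitive case using the elementary dichotomy on the socle of a $2$-transitive group to split into the affine case (socle elementary abelian and regular) and the almost simple case (socle nonabelian simple), and finally invoking the CFSG together with Hering's theorem to exhaust each sublist. The backbone is the classical fact that any $2$-transitive group $G \le S_n$ has a unique minimal normal subgroup $S$, and $S$ is either elementary abelian acting regularly, or simple nonabelian with $C_G(S)=1$, so $S \le G \le \Aut(S)$. Throughout, the heavy lifting is done by the CFSG and its consequences for multiply transitive actions.

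First I would handle the properly $2$-homogeneous, non-$2$-transitive groups. By a theorem of Kantor \cite{Kan69}, any such $G$ is solvable, has odd degree $n=q \equiv 3 \pmod 4$, and fits inside the one-dimensional affine semilinear group, with $\ASL_1(q) \le G \le \AGammaL_1(q)$; this yields case (1). After this reduction we may assume $G$ is $2$-transitive, so the socle $S$ is either elementary abelian or nonabelian simple. In the abelian case $S$ is a regular normal subgroup which we identify with the additive group of an $\FF_q$-vector space of dimension $d$, so $n=q^d$ and $G = S \semid G_0$ with $G_0 \le \GL_d(q)$ acting transitively on the nonzero vectors. This transitive linear group problem is resolved by Hering's theorem \cite{Her74, Her85} (building on earlier work of Huppert, Kantor, Livingston-Wagner), which enumerates exactly the possibilities for $G_0$: the infinite families given by $\SL_d(q)$, $\Sp_d(q)$ and the exceptional cases with $G_2(2^f)$ or $\PSU_3(3)$ as a normal subgroup (cases (2)-(4)), the finite list of solvable exceptions involving extraspecial normal subgroups (case (5)), and the sporadic small-degree examples (cases (6)-(9)). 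The identification of case (5) reduces, via representation theory of solvable linear groups on small-dimensional vector spaces, to checking the finite list $(d,q) \in \{(2,3),(2,5),(2,7),(2,11),(2,23),(4,3)\}$.

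Next I would dispatch the almost-simple case $S \le G \le \Aut(S)$. Using CFSG, one runs through alternating, classical, exceptional Lie type and sporadic simple groups $S$, and for each $S$ determines all faithful $2$-transitive permutation representations; equivalently, one classifies the maximal subgroups $M$ of $\Aut(S)$ whose action on cosets of $M \cap S$ is $2$-transitive on $S$. For alternating groups the natural action gives case (10); for $\PSL_d(q)$ the projective action on $\frac{q^d-1}{q-1}$ points gives case (11); the sporadic action of $\PSp_{2m}(2)$ on the quadratic forms accounts for case (12); the natural permutation actions of the unitary groups $\PSU_3(q)$, the Suzuki groups ${}^2B_2(q)$, and the Ree groups ${}^2G_2(q)$ on the associated ovoids/Moufang sets furnish cases (13)-(15); and the sporadic cases (16) come from the Mathieu, Higman-Sims, and Conway groups. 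The full enumeration for classical and exceptional groups is the work of Curtis-Kantor-Seitz \cite{CKS1976}, while Hering \cite{Her85} completes the sporadic socles; one simply checks that no further actions arise.

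The main obstacle is not any new mathematical insight — it is the sheer scope of the case analysis, which intrinsically depends on CFSG. Within this analysis the genuinely delicate step is Hering's classification in the affine case (case (2)-(9)), where one must rule out further transitive linear groups by showing that any extra possibility would yield an unlisted $2$-transitive action; the solvable subcase (5) is particularly finicky because the extraspecial constraint $C_{G_0}(E)=Z(G_0)$ and the faithful action of $G_0/EZ(G_0)$ on $E/Z(E)$ force the parameters $(d,q)$ into the six listed pairs, and verifying that each such pair does produce a $2$-transitive group (and that no other parameters sneak in) is the most detailed piece of the argument. Everything else — the non-$2$-transitive reduction, and the listing of $2$-transitive almost simple groups — is then a matter of pasting together the references cited in the excerpt.
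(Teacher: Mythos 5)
Your proposal follows the same route the paper itself takes: the theorem is presented as a compilation of the known classification, splitting into the non-$2$-transitive case (Livingstone--Wagner/Kantor), the affine $2$-transitive case via Burnside's socle dichotomy and Hering's theorem with Huppert's solvable subcase, and the almost simple case via Curtis--Kantor--Seitz and Hering for sporadic socles, all resting on the CFSG. The paper gives no independent proof beyond citing \cite{BH3}, \cite{DM} and the primary sources, so your outline is correct and matches its approach.
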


\begin{rem}
Note that whenever in the statement of 
Theorem \ref{t:2-homo_classification} we 
write $S \le G\le H$, for some suitable 
$H \le S_n$, then the subgroup $H$ 
coincides with $N_{S_n}(S)$.
This follows from the fact that $S \unlhd H$ for all the groups listed above,
and so $H \le N_{S_n}(S)$. Moreover,
any group containing a $2$-homogeneous 
subgroup must itself be $2$-homogeneous, hence  $N_{S_n}(S)$ is a $2$-homogeneous subgroup of $S_n$ with socle $S$ and it must
be contained in the same $H$ coming from the classification in Theorem
\ref{t:2-homo_classification}, that is, $N_{S_n}(S) \le H$.
A similar situation occurs in Case (2) with $G_0$ in place of $G$.
\end{rem}

Note that trivially if $G$ is $2$-homogeneous 
and $G=H'$ 
(for some $H\leq  S_n$), then $H$ itself is 
also $2$-homogeneous. 
Thus our approach to solving Problem 
\ref{p:2-homo} is to determine which 
derived subgroups of $2$-homogeneous 
subgroups of $S_n$ are themselves 
$2$-homogeneous. To obtain such a 
classification we use Lemma \ref{l:basic} (both 
parts (1) and (2)). 
The following result guarantees that 
the hypotheses of Lemma \ref{l:basic}
are satisfied. Indeed, it is a significant step 
in the classification of the $2$-homogeneous subgroups of $S_n$ (that is, of the 
proof of Theorem \ref{t:2-homo_classification}). 
\begin{lemma}\label{l:2-homo_unique_min_nor}
Let $G$ be a $2$-homogeneous subgroup of 
$S_n$. Then the socle 
of $G$ is the unique minimal normal subgroup of $G$. 
\end{lemma}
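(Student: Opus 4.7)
The plan proceeds in three steps: establish that $G$ is primitive; reduce, via the classical structure theory of primitive permutation groups, to the ``holomorph-of-simple'' case of two regular nonabelian minimal normal subgroups; and rule out this remaining case by exploiting 2-homogeneity.

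First I would show that $G$ is primitive. Supposing $\Delta \subsetneq \Omega$ were a nontrivial block with $2 \le |\Delta| \le n-1$, pick distinct $a, b \in \Delta$ and $c \notin \Delta$. By 2-homogeneity there is $g \in G$ with $\{a,b\}g = \{a,c\}$; the block $\Delta g$ then contains both $ag$ and $bg$, one of which must equal $a$, so $\Delta \cap \Delta g \neq \emptyset$ and hence $\Delta g = \Delta$. But then $c \in \Delta g = \Delta$, a contradiction. Next I would invoke the classical (CFSG-free) dichotomy for primitive permutation groups (see e.g.~\cite[\S 4.3]{DM}): such a group either has a unique minimal normal subgroup, or it has exactly two, which are then isomorphic, centralize each other, and act regularly. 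So I may assume for contradiction that $M_1 \ne M_2$ are two distinct minimal normal subgroups of $G$; both are then regular of order $n$. The abelian case is excluded at once: if $M_1$ were abelian, then $C_G(M_1) \ge M_1$ would be both transitive and semiregular, so $C_G(M_1) = M_1$, forcing $M_2 \le M_1$ against minimality. Thus $M_1$ must be a nontrivial direct power of a nonabelian simple group $T_1$.

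The third step, using 2-homogeneity, is the main obstacle. I would identify $\Omega$ with $M_1$ via the right-regular action, so that $M_2$ becomes the group of left translations by elements of $M_1$ and the $M_1M_2$-stabilizer of $1 \in M_1$ is the diagonal $\{(a, a^{-1}) : a \in M_1\}$, acting on $M_1$ by inner automorphisms. A short orbit computation shows that the $M_1M_2$-orbit of the 2-set $\{1, t\}$ consists exactly of the pairs $\{u, uv\}$ with $u \in M_1$ and $v$ in the $M_1$-conjugacy class of $t$ (automatically inversion-symmetric, since each 2-subset can be parametrized in two ways, yielding $v$ or $v^{-1}$). Enlarging from $M_1M_2$ to $G$ replaces the $M_1$-conjugacy classes by $G_1$-orbits on $M_1$, where $G_1$ embeds in $\Aut(M_1)$. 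Thus 2-homogeneity forces $\Aut(M_1)$ to have at most two orbits on $M_1\setminus\{1\}$. But by Burnside's $p^aq^b$-theorem $|T_1|$ has at least three distinct prime divisors, so $T_1$ (and hence $M_1$) contains elements of at least three distinct prime orders; since automorphisms preserve element orders, these lie in at least three distinct $\Aut(M_1)$-orbits, giving the desired contradiction.
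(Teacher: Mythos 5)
Your argument is correct, but it is organized quite differently from the paper's. The paper disposes of the lemma by citation alone: for $2$-transitive $G$ with nonabelian socle it quotes Burnside's theorem on the socle of a $2$-transitive group (\cite[Theorem 4.1B]{DM}), and in the remaining cases it quotes the structure theorem for socles of primitive groups (\cite[Theorem 4.3B]{DM}) together with the fact that $2$-homogeneous groups are primitive (\cite[Exercise 2.1.10]{DM}). You prove the primitivity statement directly (your block argument is exactly the standard one), invoke the same primitive-group dichotomy, and then, instead of appealing to Burnside's $2$-transitivity theorem, you eliminate the only bad configuration --- two regular nonabelian minimal normal subgroups --- by hand: identifying $\Omega$ with $M_1$, noting $G_1\le\Aut(M_1)$ inside the holomorph, computing that the $G$-orbit of $\{1,t\}$ only reaches $\{1,s\}$ with $s\in t^{G_1}\cup(t^{-1})^{G_1}$, and contradicting this with the existence (via Burnside's $p^aq^b$ theorem and Cauchy) of elements of three distinct prime orders. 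All the steps check out, including the slightly delicate inversion case $g(1)=s$, $g(t)=1$, which you correctly fold in by composing with a translation. What the two routes buy: the paper's proof is two lines and leans on the literature; yours is uniform (no case split between $2$-transitive and merely $2$-homogeneous), self-contained modulo CFSG-free standard facts, and in effect reproves the fragment of Burnside's theorem actually needed. A small simplification of your last step: your orbit computation already shows every nonidentity element of $M_1$ has the same order as $t$, which is impossible for a direct power of a nonabelian simple group, so you could phrase the contradiction without counting orbits.
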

\begin{proof}
This follows immediately by 
\cite[Theorem 4.1B]{DM} 
when $G$ is $2$-transitive and $S$ is 
nonabelian, and in the other cases from 
\cite[Theorem 4.3B]{DM} 
and the fact that $G$ is primitive 
(\cite[Exercise 2.1.10]{DM}; see also 
\cite[p. 402]{LW}).  
\end{proof}
As an immediate application of Lemma 
\ref{l:basic} we 
therefore have the following corollary.
\begin{cor}\label{c:2-homo}
Let $G$ be a $2$-homogeneous subgroup 
of $S_n$ having socle $S$. 
Then $G$ is integrable within $S_n$
if and only if $G/S$ is integrable 
within $N_{S_n}(S)/S$.
Suppose further that $G$ has a nontrivial 
characteristic perfect subgroup $K$. 
Then $G$ is integrable within $S_n$ if 
and only if $G/K$ is integrable within 
$N_{S_n}(K)/K$.
\end{cor}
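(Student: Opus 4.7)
The plan is to read off both equivalences from Lemma \ref{l:basic} by choosing $T=S_n$. For the first equivalence, I would invoke part (1) of that lemma applied to the inclusion $G\le S_n$ with the socle $S$ playing the role of the minimal normal subgroup. The two hypotheses to verify are that $S$ is the \emph{unique} minimal normal subgroup of $G$ and that $G$ is not a cyclic $p$-group. The first is provided exactly by Lemma \ref{l:2-homo_unique_min_nor}, so no extra work is needed there.

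The only small thing to dispatch is the cyclic $p$-group exclusion. A cyclic group acting transitively on $n$ points has order $n$, while $2$-homogeneity forces $|G|\ge \binom{n}{2}$; the combined inequality $n\ge \binom{n}{2}$ gives $n\le 3$. So a cyclic-$p$-group obstruction can arise only for $G=C_2$ acting on $2$ points or $G=C_3$ acting on $3$ points, and in both of these trivial cases one has $G=S$, so $G/S=1$ is integrable in any overgroup and $G$ is integrable within $S_n$ ($S_2'=1$ and $S_3'=C_3$); the asserted equivalence therefore holds by inspection.

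For the second equivalence, I would apply part (2) of Lemma \ref{l:basic} verbatim to $T=S_n$ and the given nontrivial characteristic perfect subgroup $K$ of $G$; both hypotheses on $K$ are exactly what is assumed, so no further verification is required. Overall I expect no real obstacle: this corollary is a repackaging of Lemma \ref{l:basic} with the unique-minimal-normality hypothesis of part (1) absorbed into the uniform fact, given by Lemma \ref{l:2-homo_unique_min_nor}, that the socle plays that role in every $2$-homogeneous group. The only mildly delicate point is the cyclic-$p$-group caveat, which vanishes for $n\ge 4$ and is handled by the direct check above in the two remaining degrees.
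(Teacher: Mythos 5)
Your route is the same as the paper's: the corollary is obtained there as an immediate application of Lemma \ref{l:basic} with $T=S_n$, the hypothesis of part (1) being supplied by Lemma \ref{l:2-homo_unique_min_nor}, and part (2) being quoted verbatim for the second equivalence. Your explicit attention to the ``not a cyclic $p$-group'' hypothesis is more careful than the paper, which passes over it in silence, and your reduction of that caveat to degrees $n\le 3$ (transitive cyclic $\Rightarrow$ order $n$, while $2$-homogeneity forces $|G|\ge\binom{n}{2}$) is fine.

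The one misstep is your dispatch of degree $2$. For $G=C_2\le S_2$ you assert that $G$ is integrable within $S_n$, citing $S_2'=1$; but $S_2'=1$ shows exactly the opposite: every subgroup of $S_2$ is abelian, so no $H\le S_2$ has $H'=C_2$, while $G/S=1$ is trivially integrable within $N_{S_2}(S)/S=1$. So in this degenerate degree the two sides of the asserted equivalence do not agree, and it cannot be ``checked by inspection'' -- it has to be excluded. The correct resolution is that this case simply does not occur in the setting of the paper: the classification of $2$-homogeneous groups being used (and the whole socle framework) concerns degrees $n\ge 3$, so the only cyclic $p$-group that genuinely needs attention is $C_3$ acting on $3$ points, where your verification ($C_3=S_3'$, and $G/S=1$) is correct. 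With the degree-$2$ case removed rather than ``verified'', your argument matches the intended proof.
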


To classify the $2$-homogeneous subgroups of degree
$n$ that are integrable within $S_n$ (and therefore  
prove Theorem \ref{thm:main_int_2-homo})  
we now take into consideration 
all the groups $G$ in the list of  
Theorem \ref{t:2-homo_classification} and 
apply Corollary \ref{c:2-homo} to them.
In the following we will always denote with 
$q=p^f$ ($p$ a prime) the order of the field 
on which classical and Lie type groups are 
defined, except when the groups are unitary 
in which case we set $q^2=p^f$.\\

\noindent 
Case (1).
We have that $\AGammaL_1(q)/\ASL_1(q)$ is 
isomorphic to a metacyclic group of order 
$(q-1):f$. By Lemma \ref{l:metacyclic} we 
conclude that in this case $G$ is integrable 
within $S_n$ precisely when $G$ 
is a subgroup of $(\AGammaL_1(q))'$, 
containing $\ASL_1(q)$.\\

\noindent 
Case (2). We have that 
$\ASL_d(q)=S\semid \SL_d(q)$ 
is a characteristic and perfect subgroup of 
$G=S\semid G_0$, and therefore by 
Corollary \ref{c:2-homo} the 
integrability of $G$ within $S_n$ is 
equivalent to the integrability 
of $G_0/\SL_d(q)$ within  
$\GammaL_d(q)/\SL_d(q)$. 
Note that this latter group is 
metacyclic isomorphic to
$(q-1):f$. 
Therefore Lemma \ref{l:metacyclic} implies 
that $G$ is integrable within $S_n$ if and 
only if 
$\SL_d(q)\leq G_0\leq \GammaL_d(q)'.$ \\

\noindent 
Case (3). Let $H$ be a subgroup of 
$\GammaL_d(q)$ isomorphic to $\Sp_d(q)$ and 
let $H\leq G_0\leq 
N_{\GammaL_d(q)}(H)\simeq 
\mathrm{\Gamma}\Sp_d(q)$.
By \cite[Section 2.4]{KL} this latter group 
is isomorphic to 
$$\big(\Sp_d(q)\!:\!(q-1)\big)\!:\!f.$$
In particular 
$\Sp_d(q)$ is characteristic in 
$\mathrm{\Gamma}\Sp_d(q)$, and thus by 
Corollary \ref{c:2-homo} the integrable 
$2$-homogeneous subgroups of 
this case correspond precisely to those $G_0$ 
such that $G_0/H$ is isomorphic to an 
integrable subgroup within 
$\mathrm{\Gamma}\Sp_d(q))/\Sp_d(q)$. 
Finally, note that 
$\mathrm{\Gamma}\Sp_d(q))/\Sp_d(q)$  is a 
metacyclic group. Lemma \ref{l:metacyclic} 
completes the proof of this case.\\

\noindent 
Case (4). In this case $q=2^f$. 
It is well-known that 
the group $\Sp_6(q)$ contains, as a maximal 
subgroup, a subgroup isomorphic to $G_2(q)$ 
(see \cite{Dickson1901} or  
\cite[Theorem 3.7]{Wil}), 
which is transitive on the corresponding 
projective space.
We let $H$ be a subgroup of $\SL_6(q)$
isomorphic to $G_2(q)$ and we first determine 
the structure of $N_{\GammaL_6(q)}(H)$. \\
By \cite[Table 8.29]{BHR}, 
we see that a field automorphism $\phi$ of 
order $f$ can be taken to normalize $H$. 
Thus by Dedekind's modular law we have that
$$N_{\GammaL_6(q)}(H)=N_{\GL_6(q)}(H).\seq{\phi}.$$
Also $N_{\GL_6(q)}(H)=ZN_{\SL_6(q)}(H)$,  
where $Z=Z(\GL_6(q))\simeq (q-1)$ 
(here we used again \cite[Table 8.29]{BHR}).
We now claim that 
$$N_{\SL_6(q)}(H)=
Z(\SL_6(q))\times H\simeq (q-1,3)\times 
G_2(q).$$ 
This follows from the fact that every maximal 
subgroup of $\SL_6(q)$ containing a copy of 
$G_2(q)$ is isomorphic to 
$(q-1,3)\times \Sp_6(q)$. 
To prove this statement we look at the 
maximal subgroups of 
$\SL_6(q)$ in 
\cite[Tables 8.24 and 8.25]{BHR} (eventually
also Tables 8.3 and 8.4 to exclude the unique 
non immediate case of maximal subgroups 
isomorphic to $\SL_3(q^2).(q+1).2$). 
This shows that 
$$N_{\GammaL_6(q)}(H)\simeq 
(H\times (q-1)).f$$
and therefore $N_{\GammaL_6(q)}(H)/H$ is 
metacyclic.
Lemmas \ref{l:basic} and \ref{l:metacyclic} 
complete the proof of this case. We observe that Remark \ref{rem:thmB+exceptions} summarizes the integrable subgroups for this particular case.\\

\noindent 
Case (5). 
The main reference for this case is 
\cite{Hup57}, where the full classification 
for the $2$-transitive groups of this type 
appear. In particular we have that $E$ is a 
subgroup of $\SL_d(q)$; moreover, since $q$ 
is always a  prime number, 
$G_0\leq \GL_d(q)=\GammaL_d(q)$. 
Now, the integrability of $G$ in $S_n$, by 
Corollary \ref{c:2-homo}, is equivalent to 
the integrability of $G_0$ within $\GL_d(q)$. 
Thus in particular $G_0$ must be a subgroup 
of $\SL_d(q)$ that normalizes $E$. 
We examine all possible situations that are 
classified in \cite{Hup57}.\\
When $d=2$, according to \cite{Hup57}, the subgroups of $\SL_2(q)$ 
that are transitive on nonzero vectors 
appear just when $q\in\set{3,5}$.  
Moreover, if $q=3$, since  
$\SL_2(3)\simeq 2\!\udot\! A_4$, the 
subgroup $E$ is the unique Sylow $2$-subgroup 
of $\SL_2(3)$. Then either $G_0=E$ or 
$G_0=\SL_2(3)$. Both are integrable in 
$\GL_2(3)$ since $\SL_2(3)=(\GL_2(3))'$ and 
$E=(\GL_2(3))''$. \\
When $q= 5$ then $E$ is a Sylow $2$-subgroup 
of $\SL_2(5)$, since $\SL_2(5)\simeq 
2\!\udot\! A_5$. 
In this case $G_0=N_{\SL_2(5)}(E)
\simeq 2\!\udot\! A_4\simeq \SL_2(3)$ is 
transitive on nonzero vectors and also 
integrable (in $\GL_2(5)$) since it is equal to 
$(N_{\GL_2(5)}(E))'$.\\
Finally we consider the case $d=4$ and $q=3$. 
Then 
%
%
$E$ is isomorphic to the central product 
$D_8\circ Q_8$. Also $Z(E)=Z(\GL_4(3))$ and 
up to conjugation there are only three 
subgroups in $\GL_4(3)$ that are transitive 
on nonzero vectors. They are all contained in 
$\SL_4(3)$ and they are respectively 
isomorphic to $E:5$, $(E:5).2$ and 
$(E:5).4$. Only the first of these is 
integrable being the derived subgroup of 
the other two.\\

\noindent 
Case (6). Set $L=(G_0)''$ be the second 
derived subgroup of $G_0$, 
so that $L\simeq \SL_2(5)\simeq 
2\!\udot\! A_5$ and 
$L\leq G_0\leq N_{\GammaL_2(q)}(L)$ 
with $q$ as in the statement. 
By Corollary \ref{c:2-homo}\red{,}  $G$ is 
integrable within $S_n$ if and only if 
$G_0/L$ is integrable within 
$N_{\GammaL_2(q)}(L)/L$, therefore we 
examine the structure of this last group. 
By \cite[Table 8.2]{BHR}, $L$ is always a 
maximal subgroup $\SL_2(q)$ and therefore it 
coincides with its normalizer in $\SL_2(q)$. 
When $q\neq 9$ we have that $q$ 
is prime, forcing $\GL_2(q)=\GammaL_2(q)$. 
It follows that $N_{\GammaL_2(q)}(L)=LZ$, 
where $Z=\seq{z}$ is the center of $\GL_2(q)$ 
(where we used again 
\cite[Table 8.2]{BHR}).
The 
conclusion is that, when $q\neq 9$, only 
$G=S\semid L\simeq S:\SL_2(5)$ is integrable 
within $S_n$. Assume now that $q=9$. Again 
by \cite[8.2]{BHR}, the subgroup $L$ is 
maximal in $\SL_2(9)$ and, outside 
$\SL_2(9)Z$, 
only field automorphisms of order two 
normalize it. Therefore 
$N_{\GammaL_2(9)}(L)=LZ.2$ and 
$N_{\GammaL_2(9)}(L)/L\simeq D_8$. 
We conclude that, for $q=9$,
$G_0$ is integrable within $S_n$ if it is
either $L$ or 
$L\seq{z^4}=(N_{\GammaL_2(9)}(L))'$.\\

\noindent 
Cases (7)-(9). These groups are all 
integrable, since $G_0$ 
(and therefore $G$) is perfect.\\

\noindent 
Case (10). Of course $G=A_n$ is integrable within $S_n$,  while 
$G=S_n$ is not.\\

\noindent 
Case (11). Since $S=\PSL_d(q)$ and 
$N_{S_n}(S)=\PGammaL_d(q)$, 
by \cite[Proposition 2.2.3]{KL}, the group 
$\PGammaL_d(q)/\PSL_d(q)\leq \Out(S)$
is isomorphic to a finite metacyclic group 
$m:f$, where $m=\gcd(d,q-1)$. 
Lemma \ref{l:metacyclic} together with Corollary \ref{c:2-homo}
imply that 
the integrable subgroups in this case are 
precisely all those $G$ such that 
$\PSL_d(q)\leq G \leq (\PGammaL_d(q))'$.\\


\noindent 
Case (12). Here we have that $G=S=N_{S_n}(S)$ 
is a perfect group and therefore trivially 
integrable.\\

\noindent 
Case (13). Now we write $q^2=p^f$, so that 
$S = \PSU_3(q)$ and 
$N_{S_n}(S) = \PGammaU_3(q)$.
By \cite[Proposition 2.3.5]{KL},
the group $\PGammaU_3(q)/\PSU_3(q)$ is 
isomorphic to a finite metacyclic group 
$m:(2f)$, where $m=\gcd(3,q+1)$. 
By Corollary \ref{c:2-homo},  
the integrable subgroups in this case are 
precisely  $S=\PSU_3(q)$ (always) and 
$\PGU_3(q)$ when $3\vert (q+1)$ 
and 
$\PGammaU_3(q)/\PSU_3(q)$ is not abelian. 
This latter situation happens if and only 
if $p\equiv 2 \pmod 3$ and $f/2$ is odd. \\


\noindent 
Cases (14)-(15). In both cases we have that 
$\Out(S)$ is a cyclic group of order $f$. 
Therefore, by Lemma \ref{l:metacyclic}, Theorem \ref{thm:almost_simple} and
Lemma \ref{l:almost-simple_1}, the only integrable groups are 
 $G=S$ in both cases. \\

\noindent 
Case (16).  All the simple 
groups $S$ listed have trivial or 
cyclic outer automorphism group. 
Therefore,  by Lemma \ref{l:metacyclic}, Theorem \ref{thm:almost_simple} and
Lemma \ref{l:almost-simple_1}, these are the unique 
integrable subgroups in this case.\\


It is straightforward now to check that in 
all cases - except Case (5) - the $2$-homogeneous subgroups $G$ 
that are integrable within $S_n$ are precisely 
those $G$ such that 
$$(N_{S_n}(S))''\leq 
G\leq (N_{S_n}(S))',$$
thus proving Theorem \ref{thm:main_int_2-homo}. 
We leave this verification to the reader. We collect in the following remark 
the solvable exceptions appearing in Case (5).
\begin{rem}\label{rem:thmB+exceptions}
Let $G$ be a $2$-transitive subgroup of $S_n$ that 
belongs to Case (5) of Theorem 
\ref{t:2-homo_classification}. 
Note that this is the 
unique case in which $G$ is solvable.
Then $G$ is integrable within 
$S_n$ precisely when $G$ is isomorphic 
to a group in the following table

    \begin{center}
    \begin{tabular}{l|c}
        {\rm isomorphism type of $G$} & {\rm degree $n$} \\
        \hline \\
    $3^2:Q_8$     & $3^2$\\
    $\ASL_2(3)$   & $3^2$\\
    $5^2:\SL_2(3)$ & $5^2$\\
    $3^4:((D_8\circ Q_8):5)$ & $3^4$
    \end{tabular}
\end{center}

Note that each of these subgroups is 
between $(N_{S_n}(S))''$ and 
$(N_{S_n}(S))'$ and that, apart from when 
$n=3^2$, there are also subgroups in this interval 
that are not integrable (within $S_n$). 
\end{rem}

As mentioned in the Introduction, the following 
example shows that the more general 
problem of classifying all integrable $2$-homogeneous 
groups (as abstract groups) has a different solution 
than Problem \ref{p:2-homo} has.
\begin{exa}\label{ex:PGammaL_3(7)}
{\rm The group $G=\PGL_3(7)$
acts $2$-transitively on the set of projective 
points (or, equivalently, on  the set of projective 
lines) whose size is $57$ (this group arises in 
Case (11) of Theorem \ref{t:2-homo_classification}). 
By Theorem \ref{thm:main_int_2-homo}, $G$ is 
not integrable within $S_{57}$. Nevertheless, the 
simple group $\PSL_3(7)$ has outer automorphism group 
isomorphic to $S_3$, since it is generated by an 
outer diagonal automorphism of order three and a graph 
involution. This implies that 
$(\Aut(\PSL_3(7)))'=\PGL_3(7)$ which demonstrates 
the abstract integrability of $G$.}
\end{exa}

\section*{Acknowledgements}
We authors would like to thank Peter Cameron for helpful comments on an earlier draft of the paper.

\end{document}